\newtheorem{thm}{Theorem}
\newtheorem{cor}{Corollary}
\theoremstyle{definition}
\newtheorem{ques}{Question}
\renewcommand{\Re}{\mathbb R}
\newcommand{\F}{\mathcal{F}}
\newcommand{\G}{\mathcal{G}}
\DeclareMathOperator{\conv}{conv}
\def\arXiv#1{arXiv:\href{http://arXiv.org/abs/#1}{#1}}
\def\MR#1{MR\href{http://www.ams.org/mathscinet-getitem?mr=#1}{#1}}
\title{Hexagon tilings of the plane that are not edge-to-edge}
\date{\today}
\author[D. Frettl\"{o}h]{Dirk Frettl\"{o}h}
\author[A. Glazyrin]{Alexey Glazyrin}
\author[Z. L\'{a}ngi]{Zsolt L\'{a}ngi}
\address{Technische Fakult\"{a}t, Universit\"{a}t Bielefeld, Postfach 100131, 33501 Bielefeld, Germany} 
\email{dfrettloeh@techfak.de}
\address{School of Mathematical \& Statistical Sciences, The University of Texas Rio Grande Valley, Brownsville, TX 78520, USA}
\email{alexey.glazyrin@utrgv.edu}
\address{MTA-BME Morphodynamics Research Group and Department of Geometry, Budapest University of Technology, Egry J\'{o}zsef utca 1., Budapest, Hungary, 1111 } 
\email{zlangi@math.bme.hu}
\subjclass[2010]{52C20, 52C23, 05B45}
\keywords{tiling, normal tiling, monotypic tiling, irregular vertex}
\begin{document} 

\begin{abstract}
An irregular vertex in a tiling by polygons is a vertex of one tile and belongs to the interior of an edge of another tile.
In this paper we show that for any integer $k\geq 3$, there exists a normal tiling of the Euclidean plane by convex hexagons of unit area with exactly $k$ irregular vertices. Using the same approach we show that there are normal edge-to-edge tilings of the plane by hexagons of unit area and exactly $k$ many $n$-gons ($n>6$) of unit area. A result of Akopyan yields an upper bound for $k$ depending on the maximal diameter and minimum area of the tiles. Our result complements this with a lower bound for the extremal case, thus showing that Akopyan's bound is asymptotically tight.
\end{abstract}

\maketitle

\section{Introduction}

Let $\F$ be a tiling of $\Re^2$ with convex polygons. Following \cite{DomLan}, we call a point $p \in \Re^2$ an \emph{irregular vertex} of $\F$ if it is a vertex of a tile, and a relative interior point of an edge of another one; compare Figure \ref{fig:final_tiling}: the edge labelled $E_3$ contains an irregular vertex. A tiling of $\Re^2$ having no irregular vertices is called \emph{edge-to-edge}. A tiling is \emph{normal} if for some positive $r$ and $R$, there is a disk of radius $r$ inside each tile and each tile is contained in a disk of radius $R$ \cite{GS, Schulte}. A tiling is a \emph{unit area tiling} if the area of every tile is one, and it has \emph{bounded perimeter} if perimeters of all tiles are bounded from above and from below by positive numbers. Clearly, a unit area tiling with bounded perimeter is normal, and a tiling with finitely many prototiles is both normal and has bounded perimeter.

In \cite{Nandakumar} Nandakumar asked whether there is a tiling of the plane by pairwise non-congruent triangles of equal area and equal perimeter. Kupavskii, Pach and Tardos proved that there is no such tiling \cite{KPT18_2}. 
Nandakumar also asked \cite{Nandakumar} if there is a tiling of the plane with pairwise non-congruent triangles with equal area and bounded perimeters. This was answered affirmatively in \cite{KPT18_1} and in \cite{Frettloh}. The tilings constructed in these papers have many irregular vertices. 
Later in \cite{F-R} it was shown that there is such a tiling without any irregular vertices. In the same paper it was shown that there is a tiling of the plane with pairwise non-congruent hexagons with equal area and bounded perimeters such that the tiling \emph{has} irregular vertices.
It is worth noting that while in general the requirement of having no irregular vertices is more restrictive than the requirement of having some, for tilings with convex hexagons as tiles it is the other way around: it is difficult to find hexagon tilings with many irregular vertices.
This observation is illustrated by the fact that the hexagonal tiling in \cite{F-R} has only two irregular vertices. 
Motivated by these results, the main goal of this paper is to investigate the number $k$ of irregular vertices in a unit area and bounded perimeter tiling of the Euclidean plane by convex hexagons. Our main result is the following.

\begin{thm}\label{thm:main}
In any normal tiling of the Euclidean plane with convex hexagons there are finitely many irregular vertices. Furthermore, for any integer $k \geq 3$, there is a unit area tiling of the plane with bounded perimeter convex hexagons such that the number of irregular vertices of the tiling is equal to $k$.
\end{thm}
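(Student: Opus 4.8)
\emph{Sketch of the intended argument.} For the first assertion I would appeal to Akopyan's theorem: a normal tiling is in particular a tiling by convex polygons of diameter at most $2R$ and area at least $\pi r^{2}$, and Akopyan's estimate then bounds the number of irregular vertices by a quantity depending only on the ratio $R/r$; in particular this number is finite. (The weaker statement that the density of irregular vertices vanishes already follows from the classical Euler-characteristic computation for normal tilings \cite{GS}; the passage from density zero to finiteness is precisely where the convexity of the tiles is used.)

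The substance is the construction. Fix once and for all a periodic, edge-to-edge tiling $\F_{0}$ of the plane by congruent convex hexagons of unit area (say a rescaled regular hexagonal tiling); being edge-to-edge it has no irregular vertices. I would obtain the desired tiling by altering $\F_{0}$ inside a bounded region, the building block being a \emph{gadget}: a patch of boundedly many convex unit-area hexagons such that (a) its boundary is a closed path of edges of $\F_{0}$ and its boundary tiles agree with the $\F_{0}$-tiles lying just outside, so that substituting the gadget for the $\F_{0}$-tiles it covers again gives a tiling, with no irregular vertex created on the boundary of the patch; and (b) its interior contains exactly one irregular vertex. The gadget is organised around a single convex unit-area hexagon $B$ which is wider than a background hexagon and whose bottom edge equals the union of the top edges of two convex unit-area hexagons $A_{1},A_{2}$ that meet in a single point $q$ in the relative interior of that edge — so $q$ is the lone irregular vertex — together with a bounded collar of further unit-area hexagons interpolating between $\{B,A_{1},A_{2}\}$ and $\F_{0}$. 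Because $B$ is wide it must be compensated by narrower neighbours and, presumably, by a few transitional rows of slightly sheared hexagons; the collar tiles then have to be chosen so that every area equation and every edge-matching equation closes up simultaneously, while every tile stays a \emph{convex} hexagon of \emph{unit} area with perimeter in a fixed bounded range. This bookkeeping is the technical heart of the proof.

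To realise a given $k\ge 3$, rather than scattering $k$ independent gadgets I would place $k$ of them cyclically around a central patch, linking consecutive gadgets by straight runs of a sheared variant of $\F_{0}$ and arranging (via a cyclic symmetry of the configuration) that the accumulated shear cancels over one turn, so that the central patch, the ring of $k$ gadgets, and an outer copy of $\F_{0}$ fit together into a tiling of the whole plane; the requirement that this ring close up — essentially, a closed loop of straight runs needs at least three corners — is what forces $k\ge 3$. One then checks normality (finitely many prototiles up to congruence, all of unit area and of bounded perimeter, hence a unit-area tiling with bounded perimeter) and that the irregular vertices are exactly the $k$ points coming from the $k$ gadgets. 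The step I expect to be the real obstacle is property (a) together with the unit-area constraint: changing the shape of any tile changes its area, restoring unit area forces further changes in its neighbours, and the perturbation threatens to propagate without end — so the crux is to exhibit an explicit finite collar of convex, bounded-perimeter hexagons that absorbs the perturbation completely and reconnects to the untouched background $\F_{0}$.
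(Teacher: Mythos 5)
Your treatment of the first assertion is essentially the paper's: one subdivides each edge at the irregular vertices it contains, so that a hexagon carrying $j$ of them becomes a $(6+j)$-gon of positive index, and Akopyan's bound on the total index gives finiteness. That part is fine (modulo spelling out the subdivision step).

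The construction, however, rests on a gadget that cannot exist, so the gap is not the ``bookkeeping'' you defer but an actual obstruction. Consider the planar graph whose vertices are the points that are corners of at least one tile and whose edges are the arcs between consecutive such points along tile boundaries. Convexity forces every vertex to have degree at least $3$ (at a regular vertex at least three tiles meet because all interior angles are less than $\pi$; an irregular vertex $q$ on an edge $e$ of a tile $B$ is an endpoint of the two sub-edges of $e$ and, since a single convex tile cannot have angle $\pi$ at its own vertex, at least two tiles on the other side of $e$ have $q$ as a corner and contribute at least one further edge). Likewise every bounded face has degree at least $6$, with degree exactly $6+j$ when the hexagon carries $j$ irregular vertices on its edges. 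For the finite patch induced on a large disk bounded by edges of $\F_0$, Euler's formula yields
\[
\sum_{f}\bigl(\deg f-6\bigr)\;+\;2\sum_{v}\bigl(\deg v-3\bigr)\;=\;-12,
\]
the face sum including the outer face. If your modified tiling agrees with $\F_0$ outside a compact set, take the patch so large that the outer face and all boundary vertices are untouched; their contribution to the left-hand side is then the same $-12$ as for $\F_0$ itself, so the interior contributions must sum to zero. Each interior summand is nonnegative, hence every interior face has degree exactly $6$ and every interior vertex degree exactly $3$: a compactly supported modification of $\F_0$ by convex hexagons creates \emph{no} irregular vertex whatsoever. No finite collar can ``absorb the perturbation and reconnect to the untouched background,'' and placing $k$ such (nonexistent) gadgets around a ring does not help, since the whole configuration is still a bounded perturbation of $\F_0$. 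This is precisely why the paper's construction is global: it linearly squeezes a $\pi/3$-sector of the unit-area hexagonal tiling to angle $2\pi/(3k)$, arranges $3k$ such sectors around a point, and cuts the central regular $(3k)$-gon of area $k$ into $k$ unit-area hexagons by joining the centre to the midpoints of every third edge; those $k$ midpoints are the irregular vertices, and the resulting combinatorial defect $3k-6>0$ is carried out to infinity by the sector structure instead of being cancelled inside a bounded region.
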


In Section~\ref{sec:proof} we prove Theorem~\ref{thm:main}. Our proof is based on results of Stehling \cite{ste88} and Akopyan \cite{ako18}, which provide an upper bound on the number of irregular vertices in a hexagon tiling (cf. Theorems~\ref{thm:finite} and \ref{thm:indexupperbound}, respectively). In Section \ref{sec:asymptotic} we show that the  upper bound of Akopyan in Theorem~\ref{thm:nonhexlowerbound} is asymptotically tight, using a simplified version of the construction in Section~\ref{sec:proof}. We summarize our corresponding results in Theorems~\ref{thm:nonhexlowerbound} and \ref{thm:asymptotic}. Section~\ref{sec:ques} lists some additional questions.

In the sequel we denote the Euclidean plane by $\Re^2$. Furthermore, for any points $p,q \in \Re^2$ we denote the closed segment connecting $p$ and $q$ by $[p,q]$, the convex hull of a set $S \subset \Re^2$ by $\conv S$, and the Euclidean norm of a point $p \in \Re^2$ by $| p |$.

\section{Proof of Theorem~\ref{thm:main}} \label{sec:proof}

In his thesis \cite{ste88}, Stehling proved the following result.

\begin{thm}[Stehling]\label{thm:finite}
In a normal tiling of a plane with convex polygons such that all of them have at least six edges, all but a finite number of tiles are hexagons.
\end{thm}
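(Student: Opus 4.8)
The plan is to combine the combinatorial Euler/Gauss--Bonnet identity for normal tilings with a local rigidity analysis.

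\emph{Step 1 (reduction to an edge-to-edge graph).} Refine $\F$ to a plane graph $G$ by declaring every irregular vertex to be a vertex of $G$; the faces of $G$ are exactly the tiles, but each tile $T$ now has $|T| := n(T) + i(T)$ sides, where $n(T) \ge 6$ is the number of edges of $T$ and $i(T)$ is the number of irregular vertices lying in relative interiors of edges of $T$. Thus $|T| \ge 6$, with $|T| \ge 7$ as soon as $T$ carries an irregular vertex or has at least seven edges. Every vertex of $G$ has degree at least $3$: at a regular meeting point this is clear, and at an irregular vertex the edge passing straight through already contributes $2$, while the tile on the other side contributes at least $1$ more. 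Normality enters here to record a uniform bound $|T| \le C_0 = C_0(r,R)$ and a uniform bound on vertex degrees: the tiles sharing a side or a vertex with a given cell all contain pairwise disjoint disks of radius $r$ inside a ball of radius $O(R)$, so there are only $O((R/r)^2)$ of them.

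\emph{Step 2 (the density of defects is zero).} Introduce the combinatorial curvature $\kappa(v) = 1 - \tfrac{d(v)}{2} + \sum_{f \ni v} \tfrac{1}{|f|}$. Since an interior vertex of degree $d(v)$ lies on $d(v)$ faces, each of size $\ge 6$, we get $\kappa(v) \le 1 - \tfrac{d(v)}{3} \le 0$, with equality exactly when $d(v) = 3$ and the three faces at $v$ are hexagons; call $v$ \emph{imperfect} otherwise. A short case check ($d(v) \ge 4$, or $d(v) = 3$ with one face of size $\ge 7$) gives $\kappa(v) \le -\tfrac{1}{42}$ at every imperfect interior vertex, and every non-hexagonal tile and every irregular vertex forces an imperfect vertex on its boundary. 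Applying the Euler relation $\sum_v \kappa(v) = \chi$ (with the usual boundary modification of $\kappa$) to the patch $D_\rho$ of all tiles meeting the disk of radius $\rho$ yields $\sum_{v \text{ interior}} \kappa(v) = 1 - \sum_{v \in \partial D_\rho} \kappa(v)$, so the number of imperfect interior vertices of $D_\rho$ is at most $42\,\big|\sum_{\partial D_\rho} \kappa(v) - 1\big|$. By normality $\partial D_\rho$ meets only $O(\rho)$ tiles, hence carries $O(\rho)$ vertices of bounded degree, so the right-hand side is $O(\rho) = o(\rho^2)$; thus the imperfect vertices, and in particular the non-hexagonal tiles, have density zero.

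\emph{Step 3 (from density zero to finiteness --- the crux).} The density estimate is not yet the assertion, and upgrading it is where the real work lies: one must bound the \emph{total} number of imperfect vertices by a constant depending only on $r$ and $R$. The intended route is a local rigidity lemma --- if the combinatorial ball of some fixed radius around a vertex consists entirely of perfect vertices, then, within the metric constraints imposed by convexity and by $r \le \mathrm{inradius} \le \mathrm{circumradius} \le R$, the tiling is forced near that vertex to be combinatorially the regular hexagonal tiling --- followed, as in Stehling \cite{ste88}, by an argument that two imperfect vertices at large combinatorial distance cannot coexist: joining them by a path through the forced hexagonal region and tracking how the edge directions turn along it, convexity forces a definite share of the bounded turning budget $2\pi$ to be spent per imperfect vertex, so their number is bounded; Akopyan's refinement \cite{ako18} turns this into an explicit bound in terms of the diameter and area of the tiles. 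I expect this propagation step to be the main obstacle: everything up to it is soft counting, whereas here one genuinely needs the interaction of convexity with the metric normality bounds. Once finitely many imperfect vertices are secured, every tile outside a bounded region has exactly six edges, which is the theorem (and, as a byproduct, $\F$ has only finitely many irregular vertices).
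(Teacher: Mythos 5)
Your Steps 1--2 are sound as far as they go (the refinement of $\F$ at irregular vertices is exactly the device the paper uses in Corollary~\ref{cor:upperbound}, and the curvature computation $\kappa(v)\le -\tfrac1{42}$ at an imperfect vertex checks out), but they only prove that imperfect vertices number $O(\rho)$ in a disk of radius $\rho$, i.e.\ have \emph{density zero}. That is strictly weaker than the statement: an $O(\rho)$ bound is compatible with infinitely many non-hexagons. The entire content of Stehling's theorem is the upgrade from $O(\rho)$ to $O(1)$, and that is precisely the step you defer to a ``local rigidity lemma'' and a ``turning budget'' argument that you do not carry out. As written, Step 3 is a statement of intent, not a proof; in particular the asserted key lemma --- that two imperfect vertices at large combinatorial distance cannot coexist --- is nowhere justified, would be a considerably stronger claim than the theorem itself, and is not how either Stehling or Akopyan argues. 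So there is a genuine gap at the crux.

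For comparison: the paper does not prove Theorem~\ref{thm:finite} directly either; it imports Akopyan's quantitative Theorem~\ref{thm:indexupperbound}, which bounds the \emph{total} index by $\frac{2\pi D^2}{A}-6$, a constant independent of any exhausting patch (for a normal tiling, $A\ge \pi r^2$ and $D\le 2R$, so finiteness is immediate). Akopyan achieves the $\rho$-independence not by propagating rigidity between defects but by a global angle count: each refined convex $n$-gon has angle sum $(n-2)\pi$, each tiling vertex has total angle $2\pi$ shared among at least three tiles, and the resulting deficit per unit of index is absorbed by a boundary term that can be taken over a disk of radius comparable to $D$ rather than over $\rho\to\infty$. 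That metric/angle input (convexity plus the angle sum $2\pi$ at every vertex) is exactly what your purely combinatorial curvature $\kappa$ discards, and is what you would need to reinstate to close Step 3. I would recommend either reproducing Akopyan's angle-counting argument or simply quoting Theorem~\ref{thm:indexupperbound}, rather than attempting the rigidity-propagation route.
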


Akopyan \cite{ako18} found a short proof of this theorem with concrete bounds on the number of non-hexagonal tiles. To formulate his result, we define the \emph{index} of a polygon as the number of its edges minus $6$.

\begin{thm}[Akopyan]\label{thm:indexupperbound}
If the plane is tiled with convex polygons, each having at least six edges, area at least $A$ and diameter not greater than $D$, then the sum of the indices of the tiles is not greater than $\frac{2\pi D^2}{A} - 6$.
\end{thm}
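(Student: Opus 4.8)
The plan is to deduce the estimate from Euler's formula applied to a large finite portion of the tiling, the real content being a geometric estimate showing that the ``defect'' concentrated near the boundary of that portion is controlled by $\frac{2\pi D^2}{A}$.

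First I would reduce to a finite configuration. By Theorem~\ref{thm:finite} all but finitely many tiles are hexagons; since every index is a nonnegative integer, the sum $S$ of all indices is finite, and the non-hexagonal tiles lie in a bounded set. Fix a sufficiently large $R$ and let $\Omega$ be the union of all tiles contained in the disk $B_R$ of radius $R$. For a generic such $R$ one has $B_{R-D}\subseteq\Omega\subseteq B_R$, the set $\Omega$ is a closed topological disk, its boundary $\partial\Omega$ is a simple polygon contained in the annulus $\overline{B_R}\setminus B_{R-D}$, every tile meeting $\partial\Omega$ is a hexagon, and the tiles forming $\Omega$ include all the non-hexagonal tiles, so that $\sum_{T\subseteq\Omega}(\text{index of }T)=S$.

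Next I would apply Euler's formula. Form the planar graph $G$ whose vertices are the tile-vertices and the irregular vertices lying in $\Omega$ --- each irregular vertex thus becoming an ordinary vertex of degree $3$ --- whose edges are the maximal edge-pieces between consecutive such vertices, and whose bounded faces are the tiles that make up $\Omega$; let $F$ be their number. Write $V_k$ for the number of vertices of $G$ of degree $k$ and $e_\partial$ for the number of edges of $\partial\Omega$ (which equals the number of vertices of $\partial\Omega$). Interior vertices of $G$ have degree at least $3$, so all degree-$2$ vertices lie on $\partial\Omega$; combining $V-E+F=1$, $2E=\sum_v\deg v$, and the fact that the $G$-edge-count of each face is at least its number of polygon edges yields, after simplification,
\[
S \;\le\; 2V_2 - e_\partial - 6 - 2\sum_{k\ge 4}(k-3)V_k \;\le\; 2V_2 - e_\partial - 6 .
\]
Moreover, since every tile meeting $\partial\Omega$ is a hexagon and (for large $R$) no irregular vertex lies on $\partial\Omega$, one has $2V_2-e_\partial=\sum_f(b_f-2)$, where the sum is over the boundary hexagons $f$ and $b_f\le 5$ is the number of edges of $f$ on $\partial\Omega$.

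It then remains to show $2V_2-e_\partial-2\sum_{k\ge 4}(k-3)V_k\le\frac{2\pi D^2}{A}$, and this is where the geometric hypotheses must be used and where I expect the real work to lie. The natural tool is that the simple closed polygon $\partial\Omega$ turns by a total of $2\pi$: at a degree-$2$ vertex $v$ the unique incident inner face is a hexagon $h$ and the turning equals $\pi-\alpha_h(v)>0$, while at a vertex of degree $d\ge 3$ the $d-1$ incident inner faces are hexagons whose angles at $v$ sum to $\theta_v$, the turning equals $\pi-\theta_v$ (possibly negative, but larger than $-\pi$), and at most $\pi D^2/A$ of these hexagons meet at $v$ because they all lie in the disk of radius $D$ about $v$. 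Balancing the positive and negative contributions in $2\pi=\sum_v(\text{turning at }v)$ against the hypotheses --- the area bound keeping convex corners from being collectively too flat, the diameter bound entering through the width $D$ of the annulus and of the vertex-disks --- should yield $S+6\le\frac{2\pi D^2}{A}$; here the factor $D^2/A$ is, up to a constant, the number of tiles that fit into a disk of radius $D$, the scale at which a unit of index must be accommodated. The hard part, I expect, is precisely this quantitative balancing, in particular ruling out long, thin boundary hexagons, which individually contribute almost no turning and yet occupy several edges of $\partial\Omega$.
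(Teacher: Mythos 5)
The paper does not actually prove this statement: Theorem~\ref{thm:indexupperbound} is imported verbatim from Akopyan's note \cite{ako18}, so there is no in-paper proof to measure yours against. Judged on its own terms, your proposal has a genuine gap. The Euler-formula bookkeeping in your second step is correct (indeed $S'=2V_2-e_\partial-6-2\sum_{k\ge4}(k-3)V_k$ is an identity for the graph-edge counts, and $S\le S'$), but it is the standard part and it merely relocates the entire content of the theorem into the claim $2V_2-e_\partial-2\sum_{k\ge4}(k-3)V_k\le\frac{2\pi D^2}{A}$, which you do not prove --- you explicitly defer it (``this is where I expect the real work to lie'', ``should yield''). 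A proof sketch that ends by hoping the key inequality works out is not a proof.

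Moreover, the route you indicate for that inequality does not plausibly close the gap. For a fixed large $R$ the quantities $V_2$ and $e_\partial$ each grow linearly in $R$; only their combination is bounded, and the relation ``total turning of $\partial\Omega$ equals $2\pi$'' controls $\sum_v(\pi-\theta_v)$, which is not commensurable with $2V_2-e_\partial$. Concretely, a boundary hexagon whose angles at its degree-$2$ boundary vertices are close to $\pi$ adds several units to $2V_2-e_\partial$ while contributing almost nothing to the turning, and the hypotheses $\mathrm{area}\ge A$, $\mathrm{diam}\le D$ do not exclude near-flat angles (a hexagon that is a small perturbation of a square satisfies both). Your one quantitative remark --- that at most $\pi D^2/A$ tiles meet at a single vertex --- bounds a local degree but never enters the global boundary sum, so the specific constant $\frac{2\pi D^2}{A}$ does not emerge from the argument at any point. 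Two smaller issues: invoking Theorem~\ref{thm:finite} is logically admissible but backwards (Akopyan's bound is precisely the quantitative statement from which Stehling's theorem follows, and your finite-patch inequality holds for every $R$ without knowing a priori that $S<\infty$); and the claims that $\Omega$ is a topological disk with simple polygonal boundary, that no face meets $\partial\Omega$ in two separate arcs (needed for $2V_2-e_\partial=\sum_f(b_f-2)$), and that $b_f\le 5$, all require justification you only gesture at with ``for a generic such $R$''.
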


This theorem immediately implies the finiteness of the number of irregular vertices in a normal tiling by hexagons.

\begin{cor}\label{cor:upperbound}
In a normal tiling by hexagons, there is a finite number of irregular vertices.
\end{cor}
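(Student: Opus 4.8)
The plan is to deduce Corollary~\ref{cor:upperbound} directly from Theorem~\ref{thm:indexupperbound}, together with an elementary observation relating irregular vertices to the edge counts of tiles. First I would record the local picture at an irregular vertex $p$: by definition $p$ is a vertex of some tile $T$ and lies in the relative interior of an edge $E$ of another tile $T'$. Since the tiling is a tiling by convex polygons, the edge $E$ of $T'$ is, as a segment, subdivided by the vertices of the neighbouring tiles lying in its relative interior; each such interior point is an irregular vertex of the tiling. The key combinatorial point is that each irregular vertex lying in the relative interior of a fixed edge of a fixed tile $T'$ increases, in a bookkeeping sense, the effective number of ``corners'' one encounters when traversing the boundary of $T'$, so that the tiles carrying many irregular vertices on their boundary are forced to have a correspondingly large edge count --- but as tiles they are still convex polygons and hence, being assumed to be hexagons here, this is impossible beyond a bounded amount; more precisely the surplus is absorbed by the index of \emph{other} tiles.

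More concretely, I would argue as follows. A normal tiling is, in particular, a tiling by convex polygons in which every tile has at least $6$ edges (they are all hexagons). Normality gives constants $r,R>0$, hence every tile has area at least $A := \pi r^2$ and diameter at most $D := 2R$, so Theorem~\ref{thm:indexupperbound} applies and yields $\sum_T \ind(T) \le \frac{2\pi D^2}{A} - 6 = \frac{8 R^2}{r^2} - 6$, a finite bound. It remains to bound the number of irregular vertices by a function of $\sum_T \ind(T)$. For this, consider any irregular vertex $p$. It is the vertex of at least one tile; follow an edge $E$ of a tile $T'$ whose relative interior contains $p$. Since $T'$ is a hexagon, the segment $E$ is one of its six edges, and the interior points of $E$ that are irregular vertices of the tiling are vertices of the tiles on the other side of $E$. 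The crucial claim is that such configurations are rare: if no tile had index $>0$, i.e.\ if every tile were a genuine hexagon with no irregular vertex on its boundary, the tiling would be edge-to-edge, and edge-to-edge normal tilings by hexagons have no irregular vertices at all. So every irregular vertex must be ``charged'' to some tile of positive index in its bounded neighbourhood.

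The main obstacle --- and the step I would spend the most care on --- is making this charging precise: I must exhibit an injective-up-to-bounded-multiplicity assignment from irregular vertices to the index contributions counted by Theorem~\ref{thm:indexupperbound}. The clean way is a local Euler-type count. Around an irregular vertex $p$ the tiles fit together so that the full angle $2\pi$ at $p$ is partitioned, with at least one tile contributing an angle $\le \pi$ at $p$ (the one for which $p$ lies in the interior of an edge sees angle exactly $\pi$ from that edge) --- wait, rather: on the edge $E$ containing $p$, the tile $T'$ contributes a straight angle at $p$, so $p$ is a ``flat'' corner of $T'$. Now walk along $\bd T'$: $T'$, being a hexagon, has exactly $6$ genuine corners, but the decomposition of its boundary into maximal segments shared with single neighbours may have more than $6$ pieces; each extra breakpoint is an irregular vertex. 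By double counting breakpoints over all tiles and comparing with vertex-figure data, one shows the total number of breakpoints, hence of irregular vertices, is linearly controlled by $\sum_T \ind(T)$ (indeed by a small constant times it). Since that sum is finite by Akopyan's theorem, so is the number of irregular vertices, which is exactly the assertion of Corollary~\ref{cor:upperbound}. In the write-up I would present the charging argument explicitly and defer the sharper linear constant, as the corollary only needs finiteness.
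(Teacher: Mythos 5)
Your overall strategy --- deduce the corollary from Theorem~\ref{thm:indexupperbound} by relating irregular vertices to indices --- is the right one and is the paper's, but as written your charging argument has a genuine gap. You propose to bound the number of irregular vertices by a constant times $\sum_T \ind(T)$, where the sum is taken over the tiles of the given tiling and you invoke ``tiles of positive index in its bounded neighbourhood.'' In a tiling by hexagons \emph{every} tile has index $0$, so $\sum_T \ind(T)=0$, there are no tiles of positive index to charge to, and your bound would force the (false) conclusion that no irregular vertices exist at all; your application of Theorem~\ref{thm:indexupperbound} to the original tiling only yields the vacuous statement $0 \le \frac{2\pi D^2}{A}-6$. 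The sentence ``if every tile were a genuine hexagon with no irregular vertex on its boundary, the tiling would be edge-to-edge'' is true by definition but circular, and it conceals exactly the identification that needs justification: having an irregular vertex on an edge is \emph{not} the same as having positive index until you change the combinatorial structure of the tiles.

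The missing step --- which is the entire content of the paper's two-sentence proof --- is to perform that change first: for each tile, subdivide every edge at all the irregular vertices lying in its relative interior. This produces a new tiling by the same convex sets, now regarded as convex polygons (with some straight angles) having at least six edges, the same areas and diameters, in which each irregular vertex of the original tiling contributes $+1$ to the index of at least one tile. Applying Theorem~\ref{thm:indexupperbound} to this \emph{subdivided} tiling bounds its total index, hence the number of irregular vertices, by $\frac{2\pi D^2}{A}-6$ with $A=\pi r^2$, $D=2R$ from normality. Your ``breakpoints along $\bd T'$'' are exactly the new vertices of this subdivision, so your intuition is correct; you just never carry out the re-triangulation of the bookkeeping that makes Akopyan's theorem applicable with a nonzero right-hand side on the left. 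The deferred ``local Euler-type count'' is not needed once this is done.
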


\begin{proof}
For each hexagon with an irregular vertex in the interior of one of its edges, we can subdivide this edge into several segments by all irregular vertices it contains and obtain a polygon with a positive index. By Theorem \ref{thm:indexupperbound}, the number of polygons of positive index is finite and, therefore, the number of irregular vertices in the initial tiling is finite, too.
\end{proof}

Now we prove the second part of Theorem~\ref{thm:main}.

\begin{proof}
Consider a unit area regular hexagonal tiling $\F$. Let $H$ be a tile of $\F$, and let $E$ be an edge of $H$. Without loss of generality, we may assume
that the centre of $H$ is the origin $o$. Let $R$ denote the angular region with the apex $o$ and with the boundary consisting of the two half-lines starting at $o$ and passing through the endpoints of $E$ (cf. Figure~\ref{fig:hexagonal_lattice}). Let $T$ be the regular triangle $H \cap R$ of area $1/6$.

\begin{figure}[ht]
\begin{center}
\includegraphics[width=.5\textwidth]{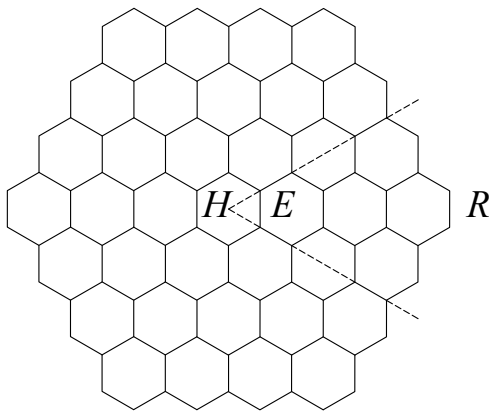}
\caption{The regular hexagonal lattice tiling $\F$}
\label{fig:hexagonal_lattice}
\end{center}
\end{figure}

Let $\alpha$ be an arbitrary angle measure from $(0,\frac{\pi}{3}]$ and let $f : \Re^2 \to \Re^2$ be an area-preserving linear transformation that maps $T$ to an isosceles triangle $T'$ with the apex $o$ and angle $\alpha$ at $o$. Then $R'=f(R)$ is an angular region with angle $\alpha$ at its apex. We denote the two half lines bounding $R'$ by $L_1$ and $L_2$. Applying $f$ also to the restriction of $\F$ to $R$ we obtain a tiling $\F'$ of $R'$.
Let the vertices of $\F'$ on $L_1$ be denoted by $p_1, p_2, \ldots$, in the consecutive order, and the vertices on $L_2$ be denoted by $q_1, q_2, \ldots$, where $q_i$ are symmetric to $p_i$ with respect to the symmetry axis of $R'$.
Then the tiles in $\F'$ are the following:
\begin{itemize}
\item the triangle $T'$ of area $\frac{1}{6}$;
\item affinely regular hexagons of unit area;
\item trapezoids of area $\frac{1}{2}$, obtained by dissecting an affinely regular hexagon into two congruent parts by a diagonal connecting two opposite vertices.
\end{itemize}
Note that $\F'$ is symmetric with respect to the symmetry axis of $R'$, the edge of each trapezoid in $\F'$ is either $[p_{2m},p_{2m+1}]$ or $[q_{2m},q_{2m+1}]$ for some positive integer $m$ (cf. Figure~\ref{fig:affine_copy}), and the two angles of the trapezoids on these edges are acute.

\begin{figure}[ht]
\begin{center}
\includegraphics[width=.6\textwidth]{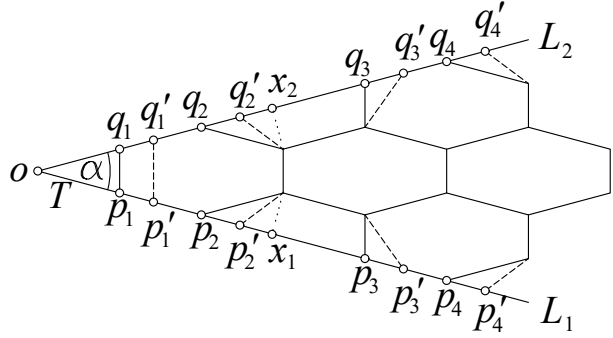}
\caption{Modifying an affine copy $\F'$ of $\F \cap R$}
\label{fig:affine_copy}
\end{center}
\end{figure}

In the next part we replace all points $p_i$ and $q_i$ by points $p_i'$ and $q_i'$, respectively, to obtain a tiling $\G$ of $R'$, combinatorially equivalent to $\F'$, which has the following properties:
\begin{itemize}
\item $\G$ is axially symmetric to the symmetry axis of $R'$,
\item the triangle $T''$ corresponding to $T'$ is an isosceles triangle of area $\frac{1}{3}$, with $o$ as its apex;
\item the hexagons corresponding to the hexagons in $\F'$ have unit area;
\item the quadrangles corresponding to the trapezoids in $\F'$ are also trapezoids of area $\frac{1}{2}$, and the two angles on the edges $[p'_{2m},p'_{2m+1}]$ or $[q'_{2m},q'_{2m+1}]$ ($m \in \mathbb{N}$) are acute.
\end{itemize}

Let $H'$ be the hexagon cell of $\F'$ with $p_1, p_2, q_1, q_2 \in H'$. Let the remaining two vertices of $H'$ be $z_1$ and $z_2$ such that $z_1$ is closer to $L_1$. For $i=1,2$, let $x_i$ be the orthogonal projection of $z_i$ onto $L_i$. It is an elementary computation to show that if $\alpha = \frac{\pi}{3}$, then the area $A$ of the convex pentagon $\conv \{ o, x_1, z_1, z_2, x_2\}$ is $\frac{4}{3}$, and if $0 < \alpha < \frac{\pi}{3}$, then $A > \frac{4}{3}$.
Now we apply modifications as follows. Replace the segment $[p_1,q_1]$ by the parallel segment $[p'_1,q'_1]$ such that $p'_1 \in L_1$, $q'_1 \in L_2$, and the isosceles triangle $T'' = \conv \{ o, p'_1, q'_1 \}$ has area $\frac{1}{3}$. Then the area of the hexagon $\conv \{ p'_1, x_1, z_1, z_2, x_2, p'_2\}$ is greater than $1$. Thus, there are points $p'_2 \in [p'_1,x_1]$, $q'_2 \in [q'_1,x_2]$ such that the area of $\conv \{ p'_1,p'_2, z_1, z_2, q'_2, q'_1\}$ is one, and this hexagon is axially symmetric to the symmetry axis of $R'$; i.e. $|p'_2-p'_1| = |q'_2-q'_1|$. Until now, we have modified the construction to satisfy the conditions in the previous list apart from the fact that the quadrangles with $[p'_2,p_3]$ and $[q'_2,q_3]$ as edges have area $\frac{5}{12}$.

Observe that if $[a,b]$ and $[c,d]$ are parallel edges of a trapezoid, then moving $[c,d]$ parallel to $[a,b]$ does not change the area of the trapezoid.
Using this observation, we set $v_1=p'_2-p_2$, $v_2 = q'_2-q_2$, and $p'_i = p_i+v_1$ and $q'_i = q_i+v_2$ for all integers $i \geq 2$.
Then, in the resulting tiling $\G$ any quadrangle having $[p'_{2m},p'_{2m+1}]$ or $[q'_{2m},q'_{2m+1}]$ ($m \in \mathbb{N}$) as an edge has area $\frac{1}{2}$, and the two angles lying on this edge are acute. Using the same observation, we may derive the analogous statement for the hexagons containing $[p'_{2m-1},p'_{2m}]$ or $[q'_{2m-1},q'_{2m}]$ ($m \in \mathbb{N}$, $m \geq 2$) as an edge. Clearly, the tiling $\G$ obtained in this way satisfies all the required conditions.

Observe that if we rotate $\G$ around $o$ by angle $\alpha$, and glue the pairs of quadrangles meeting in an edge on $L_1$ or $L_2$, we obtain a tiling of an angular region of angle $2\alpha$, in which any tile contained entirely in the interior of the region is a hexagon of unit area.

\begin{figure}[ht]
\begin{center}
\includegraphics[width=.6\textwidth]{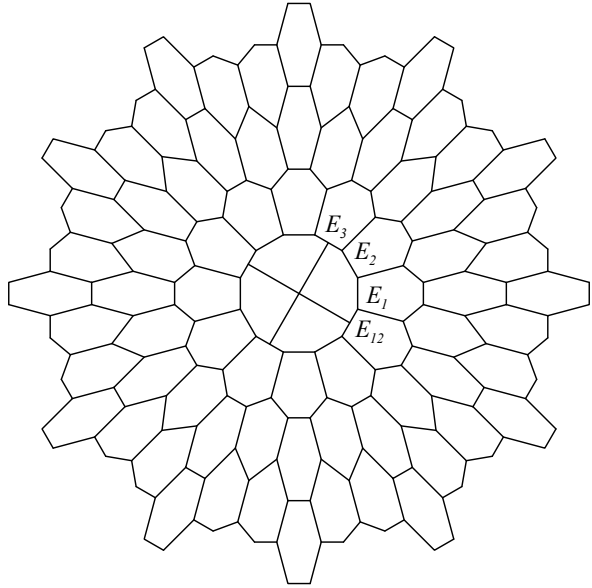}
\caption{The tiling $\G_4$ constructed in the proof of Theorem~\ref{thm:main}}
\label{fig:final_tiling}
\end{center}
\end{figure}

Now we prove Theorem~\ref{thm:main}.
Let $k \geq 3$ be fixed, and let $\G$ be the tiling constructed in the previous part of the proof with angle $\alpha = \frac{2\pi}{3k}$.
Let us rotate $\G$ around $o$ in counterclockwise direction and glue the pairs of quadrangles meeting on $L_1$ or $L_2$. Continuing this process we obtain a tiling of $\Re^2$, in which all tiles are unit area hexagons, outside a regular $(3k)$-gon $P$ centred at $o$, with area $k$. This regular $(3k)$-gon is obtained as the union of all the rotated copies of the triangle $T''$. Let $E_i$, $i=1,2,\ldots, 3k$ denote the edges of $P$. Then, dissecting $P$ with $k$ segments, connecting $o$ to the midpoint of an edge $E_{3j}$, $j=1,2,\ldots,k$, yields a tiling of $P$ with $k$ unit area hexagons (cf. Figure~\ref{fig:final_tiling}). The $k$ midpoints, used as vertices of the tiles in $P$ are not vertices of any other tiles, and thus they are irregular. On the other hand, all other vertices of the obtained tiling, which we denote by $\G_k$, are clearly regular. Finally, $\G_k$ consists of five prototiles, and hence, it has bounded perimeter and is normal.
\end{proof}

\section{Tightness of the upper bound} \label{sec:asymptotic}
Using the construction from Theorem \ref{thm:main} and subdividing the edges of polygons by irregular vertices, as in Corollary \ref{cor:upperbound}, we can find the counterpart of Stehling's theorem, i.e. we show the existence of normal tilings in which all tiles have at least six edges and the number of non-hexagon tiles is an arbitrarily large finite number. However, a simplified version of the construction from Theorem \ref{thm:main} provides also unit area and bounded perimeter tilings with the same property.

\begin{thm}\label{thm:nonhexlowerbound}
For any integer $k \geq 2$, there is a unit area tiling of the plane by convex polygons with at least six edges such that the set of the perimeters of the tiles is bounded, and the number of non-hexagon tiles is equal to $k$.
\end{thm}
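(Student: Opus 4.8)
The plan is to reuse the angular-region construction from the proof of Theorem~\ref{thm:main}, but to stop one step earlier: instead of gluing pairs of quadrangles across $L_1$ and $L_2$ so that the trapezoids disappear, I will keep some of the trapezoids as genuine tiles and, more importantly, arrange the central region so that it is covered by one or more non-hexagonal tiles rather than by hexagons. Concretely, take the tiling $\G$ of the angular region $R'$ with apex $o$ and angle $\alpha$, as constructed above, whose tiles are an isosceles triangle $T''$ of area $\tfrac13$ at the apex, affinely regular hexagons of unit area, and trapezoids of area $\tfrac12$ whose two acute angles lie on the edges contained in $L_1$ or $L_2$. Choosing $\alpha = \tfrac{2\pi}{3k}$ and taking $3k$ rotated copies of $\G$ about $o$ tiles the whole plane; away from the central regular $(3k)$-gon $P$ (area $k$), every tile is a unit-area hexagon, while $P$ itself is partitioned into $3k$ copies of the triangle $T''$.

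The point now is that instead of subdividing $P$ into $k$ unit hexagons by the segments from $o$ to the midpoints of every third edge, I leave $P$ essentially intact, or rather merge the $3k$ triangular pieces into $k$ congruent ``pie slices'' each of area $1$, where one slice is a convex polygon with more than six edges. Merging three consecutive copies of $T''$ gives a region bounded by three edges of $P$ together with two radial segments; this is a pentagon (area $1$), which does not help since $5<6$. To force at least six edges I instead keep the trapezoid structure near the apex: rather than rotating $\G$ so that the trapezoids on $L_1$ and $L_2$ fuse, I rotate by $\alpha$ and fuse trapezoids on all but finitely many of the copies, leaving $k$ ``defect'' copies where the innermost trapezoid stays split, producing a tile adjacent to $P$ with seven edges. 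Equivalently — and this is the cleaner route I would actually write up — I replace the central triangle $T''$ by the union of $T''$ with a slab cut from the first ring of hexagons, so that each of the $k$ central tiles is a convex polygon bounded by three short edges along $\partial P$ and three further edges coming from its two hexagonal neighbours, i.e. a (convex) hexagon or, after a small perturbation making two of those edges non-collinear, an $n$-gon with $n\ge 6$; by tuning one free length parameter (exactly as the area parameter $v_1$ was tuned in the proof of Theorem~\ref{thm:main}) I make its area equal to $1$ while keeping all hexagons at unit area.

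The verification splits into three routine checks. First, convexity: every non-hexagonal central tile must remain convex, which follows because $T''$ is isosceles with a small apex angle and the adjacent hexagons are affinely regular, so the relevant interior angles stay below $\pi$ for all $\alpha\in(0,\tfrac{\pi}{3}]$; this is the same elementary angle/area computation already invoked for $\G$. Second, the area bookkeeping: using the observation that translating one parallel side of a trapezoid (or of the analogous slab) leaves its area unchanged, one propagates the single adjustment outward along $L_1,L_2$ exactly as in the proof of Theorem~\ref{thm:main}, so all hexagons keep unit area and all trapezoids keep area $\tfrac12$. Third, normality and bounded perimeter: the final tiling has only finitely many prototiles (the unit hexagons, the trapezoids of area $\tfrac12$, and the $k$ central polygons, of which there are at most a bounded number of shapes), so it is automatically normal with bounded perimeter. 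The only genuine obstacle is the first one — certifying that one can simultaneously (a) make the central tiles have at least six edges, (b) keep them convex, and (c) hit area exactly $1$ — and this is handled precisely by the pentagon-area inequality already established in the proof of Theorem~\ref{thm:main} (the area of $\conv\{o,x_1,z_1,z_2,x_2\}$ exceeds $\tfrac43>1$), which leaves enough room to grow the central tile past five edges while shrinking it back to unit area. The case $k=2$ is included because, unlike the irregular-vertex construction, here no three edges need to meet at $o$ with a segment attached, so $\alpha=\tfrac{\pi}{3}$ already works.
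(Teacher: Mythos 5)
Your proposal correctly identifies the central obstacle --- that merging consecutive apex triangles of the $(3k)$-gon into unit-area slices yields pentagons, which have too few edges --- but neither of your proposed workarounds actually resolves it. The ``defect copy'' idea, in which the innermost trapezoid on $L_1$ or $L_2$ is left unfused, leaves a quadrilateral as a genuine tile of the final tiling, violating the requirement that every tile have at least six edges (and the adjacent tile picking up an extra vertex would be a $7$-gon only in the combinatorial sense of Corollary~\ref{cor:upperbound}, at the cost of reintroducing an irregular vertex next to a $4$-gon tile). Your ``cleaner route'' fares no better: a central tile bounded by three edges along $\bd P$ and three edges borrowed from its hexagonal neighbours has exactly six edges, i.e.\ it \emph{is} a hexagon, so it contributes nothing to the count of non-hexagon tiles; no ``small perturbation making two edges non-collinear'' can push a $6$-gon to an $n$-gon with $n>6$. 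The theorem needs exactly $k$ tiles with strictly more than six edges, and your construction produces none.

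The paper's resolution is simpler and sidesteps the modification step entirely: it glues $6k$ copies of the \emph{unmodified} tiling $\F'$ of the angular region of angle $\pi/(3k)$ (whose apex triangle $T'$ has area $\tfrac16$, not $\tfrac13$). The trapezoids fuse into unit hexagons as before, and the central tile is now a regular $(6k)$-gon of area $k$. Cutting it radially into $k$ congruent unit-area slices gives each slice six edges of the $(6k)$-gon plus two radial edges, i.e.\ a convex octagon (for $k=2$, one diagonal of the $12$-gon yields two heptagons). The essential point you are missing is that the number of boundary edges available to each unit-area central slice is controlled by how many copies of the angular region you glue, so doubling that number (by halving the apex area back to $\tfrac16$) is exactly what lifts the slices from pentagons past the hexagon threshold. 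If you want to salvage your write-up, replace $\G$ by $\F'$, take $\alpha=\pi/(3k)$ with $6k$ copies, and the rest of your bookkeeping (finitely many prototiles, hence normality and bounded perimeter) goes through verbatim.
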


\begin{proof}

We follow the proof of Theorem \ref{thm:main} and obtain a tiling $\F'$ of the angular region with the angle measure $\alpha=\frac {\pi} {3k}$. Instead of modifying this tiling we just glue $6k$ copies of $\F'$ to obtain a tiling of the plane (cf. Figure~\ref{fig:nonhex_tiling}).

\begin{figure}[ht]
\begin{center}
\includegraphics[width=\textwidth]{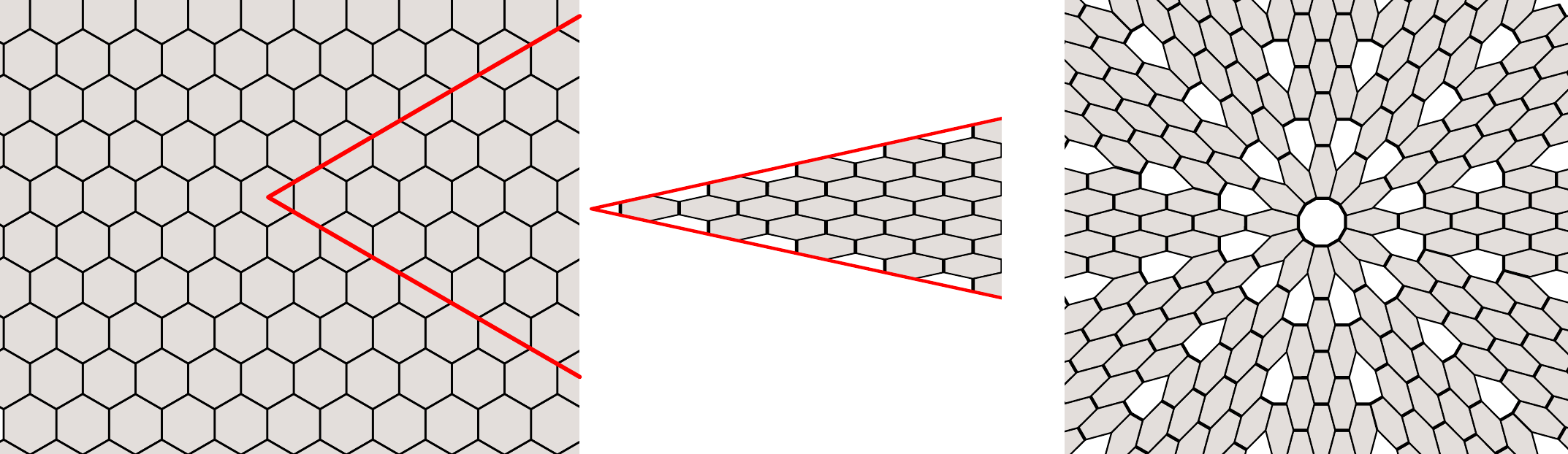}
\caption{The tiling used in the proofs of Theorems~\ref{thm:nonhexlowerbound} and
\ref{thm:asymptotic} for $k=2$}
\label{fig:nonhex_tiling}
\end{center}
\end{figure}

This is a tiling of the plane by hexagons of unit area with two prototiles, and a regular $(6k)$-gon of area $k$ in the centre. For $k=2$, we connect the opposite vertices of the central $12$-gon to divide it into two heptagons of unit area. For $k\geq 3$, we connect the centre of the $(6k)$-gon with every sixth vertex to subdivide it into $k$ congruent octagons. Overall, we obtain an edge-to-edge tiling $\F_k$ by unit area polygons with just three prototiles.
\end{proof}

We use the construction above to show that the bound in Theorem \ref{thm:indexupperbound} is asymptotically tight.
To state our result, for any normal tiling $\F$ of the plane by convex polygons having at least six edges, we denote the sum of the indices of the tiles in $\F$ by $i(\F)$, and call it the \emph{total index} of $\F$.

\begin{thm} \label{thm:asymptotic}
There is a sequence of normal tilings $\F_k$ by convex polygons having at least six edges such that 
\[ \lim_{k \to \infty} \frac{\frac{2 \pi D_k^2}{A_k} -6}{i(\F_k)} = \frac{4}{3}, \]
where, analogously to Theorem \ref{thm:indexupperbound}, $D_k$ denotes the maximal diameter, and 
$A_k$ denotes the minimal area of the tiles in $\F_k$, respectively.
\end{thm}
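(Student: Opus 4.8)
The plan is to use the tilings $\F_k$ constructed in the proof of Theorem~\ref{thm:nonhexlowerbound} and compute the three relevant quantities $D_k$, $A_k$ and $i(\F_k)$ explicitly as functions of $k$, then take the limit. First I would recall from that construction that each $\F_k$ is a unit area tiling, so $A_k = 1$ for every $k$; this disposes of one of the three quantities immediately. Second, I would identify the tile of maximal diameter: apart from finitely many tiles near the centre, all tiles are affinely regular hexagons of unit area or trapezoids of area $\frac12$ coming from a single angular sector $\F'$ with apex angle $\alpha = \frac{\pi}{3k}$, together with one large central $k$-gon subdivided into $k$ heptagons (for $k=2$) or octagons (for $k \geq 3$). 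As $k \to \infty$ the sector becomes very thin and the hexagons far from the apex become long and skinny, so $D_k \to \infty$; the main computational task is to pin down the growth rate of $D_k$.

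The key observation is that the affine map $f$ used in the construction is area-preserving and sends the $\frac{\pi}{3}$-wide sector $R$ (in which the hexagons have bounded diameter) to the $\alpha$-wide sector $R'$. Writing $f$ in coordinates adapted to the bisector of the sector, $f$ contracts in the angular direction by a factor comparable to $\sin(\alpha/2)/\sin(\pi/6) \asymp \alpha$ and, to preserve area, expands in the radial direction by a factor comparable to $1/\alpha \asymp k$. Hence the "radius" of the portion of the tiling needed to tile the full $2\pi$ — equivalently the circumradius of the central regular $k$-gon $P$, whose area is $k$ — scales like $\sqrt{k}$ up to a bounded factor, but the individual hexagons, being stretched radially by a factor $\asymp k$ while the whole picture has radius $\asymp \sqrt{k}$, have diameter $\asymp \sqrt{k}$ as well; more precisely I expect $D_k^2$ to be asymptotic to a constant times $k$. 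The cleanest route is: (i) show the central $k$-gon $P$ has area exactly $k$ and is regular, hence its circumradius squared is $\frac{k}{k \sin(2\pi/k)} \cdot 2 = \frac{2}{\sin(2\pi/k)} \sim \frac{k}{\pi}$, and the diameter of $P$ is therefore $\asymp \sqrt{k}$; (ii) show that the longest tile outside $P$ — the outermost hexagon or trapezoid of $\F'$ before the tiling is truncated, or whichever hexagon realizes the max — has diameter asymptotic to (a constant times) the same $\sqrt k$, so that $D_k \sim c\sqrt{k}$ for an explicit $c$. Then $\frac{2\pi D_k^2}{A_k} - 6 = 2\pi D_k^2 - 6 \sim 2\pi c^2 k$.

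For the denominator, $i(\F_k)$ is read off directly from the construction: $\F_k$ has exactly $k$ non-hexagonal tiles, each a heptagon (index $1$) when $k = 2$ or an octagon (index $2$) when $k \geq 3$, so for large $k$ we have $i(\F_k) = 2k$. Therefore the ratio in the statement is asymptotic to $\frac{2\pi c^2 k}{2k} = \pi c^2$, and the whole proof reduces to showing $c^2 = \frac{4}{3\pi}$, i.e. $D_k^2 \sim \frac{4k}{3\pi}$. I would verify this by an explicit trigonometric computation: express the vertices of the outermost surviving hexagon of $\G$ (or $\F'$) in terms of $\alpha$, compute the squared length of its longest diagonal, substitute $\alpha = \frac{\pi}{3k}$, and expand as $k \to \infty$; the pentagon area computation already carried out in the proof of Theorem~\ref{thm:main} (the pentagon $\conv\{o,x_1,z_1,z_2,x_2\}$ has area $\frac43$ when $\alpha = \frac{\pi}{3}$) is the same elementary calculation that should yield the constant $\frac43$ here, which is presumably why that number appears in the limit.

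The main obstacle I anticipate is purely bookkeeping rather than conceptual: carefully verifying that the diameter of $\F_k$ is attained (asymptotically) by the hexagon one expects and not by the central polygon $P$ or by one of the finitely many modified tiles, and tracking the area-preserving stretch factor of $f$ precisely enough to get the constant $\frac43$ rather than merely the order of magnitude. Once the explicit formula $D_k^2 \sim \frac{4k}{3\pi}$ is established, combining it with $A_k = 1$ and $i(\F_k) = 2k$ gives the claimed limit $\frac43$ immediately.
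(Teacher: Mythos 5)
There is a genuine gap, and it is quantitative rather than conceptual: your choice of tiling makes the limit come out to $4$, not $\frac{4}{3}$. You take $\F_k$ to be the tiling of Theorem~\ref{thm:nonhexlowerbound} \emph{with} the central polygon subdivided into $k$ octagons, so that $i(\F_k)=2k$. The paper instead uses the construction \emph{without} that subdivision: the central regular $(6k)$-gon (not a $k$-gon, as you write) is kept as a single tile of index $6k-6$, and it is this large single index that makes the denominator $\sim 6k$ and the ratio tend to $\frac{4}{3}$. With your choice, the numerator is unchanged but the denominator is only $2k$, and the limit becomes $4$ --- which proves nothing about the value $\frac{4}{3}$ asserted in the statement (and, as an upper bound for the ratio, $4$ is strictly worse).

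The inconsistency is already visible inside your own sketch. Your step (i) correctly gives circumradius squared $\rho^2=\frac{2}{\sin(2\pi/n)}$ for a regular $n$-gon of area $k$ ($n=6k$ here yields $\rho^2=\frac{1}{3\sin(\pi/3k)}\sim\frac{k}{\pi}$), and the affinely regular hexagons adjacent to the central polygon have long diagonal exactly $2\rho$, so the maximal diameter satisfies $D_k^2=4\rho^2=\frac{4}{3\sin(\pi/3k)}\sim\frac{4k}{\pi}$. This contradicts the value $D_k^2\sim\frac{4k}{3\pi}$ that your step (ii) needs; no tile has diameter that small compared to the hexagons, and subdividing the central polygon cannot lower $D_k$, since the extremal tiles are the hexagons. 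Plugging the correct $D_k^2\sim\frac{4k}{\pi}$ into your ratio gives $\frac{2\pi\cdot 4k/\pi}{2k}=4$. The fix is simply to drop the subdivision, take $i(\F_k)=6k-6$, and compute $\lim_{k\to\infty}\frac{2\pi D_k^2-6}{6k-6}=\lim_{k\to\infty}\frac{8k}{6k}=\frac{4}{3}$ using $\lim_{x\to 0}\frac{\sin x}{x}=1$. (Your guess that the constant $\frac{4}{3}$ is inherited from the pentagon-area computation in the proof of Theorem~\ref{thm:main} is also a red herring: it arises as $\frac{2\pi\cdot(4/\pi)}{6}$.)
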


\begin{proof}
We use the construction from the proof of Theorem~\ref{thm:nonhexlowerbound} without 
partitioning the central $(6k)$-gon into octagons. Let us denote the resulting tiling 
by $\F_k$. All tiles in $\F_k$ but one are unit area hexagons, of two distinct 
shapes. The central tile in $\F_k$ is a $(6k)$-gon, hence the
sum of indices in $\F_k$ is $i(\F_k) = 6k-6$. Let us compare this value 
with the bound $\frac{2 \pi D_k^2}{A_k}-6$ from Theorem \ref{thm:indexupperbound}.

Obviously the smallest area $A_k$ of the tiles in $\F_k$ is one.
We show that all three prototiles have the same diameter $D_k= \frac{2}{\sqrt{3 \sin(\pi/3k)}}$. 

Let us call the area preserving linear transformation that 
maps the cone with angle $\frac{\pi}{3}$ to the cone with angle 
$\alpha$ by $f$. The image of a regular hexagon with diameter $2r_1$ under 
$f$ is an affinely regular hexagon with diameter $2r_k$, see Figure \ref{fig:affine-6gons}. 

\begin{figure}
\[ \includegraphics[width=60mm]{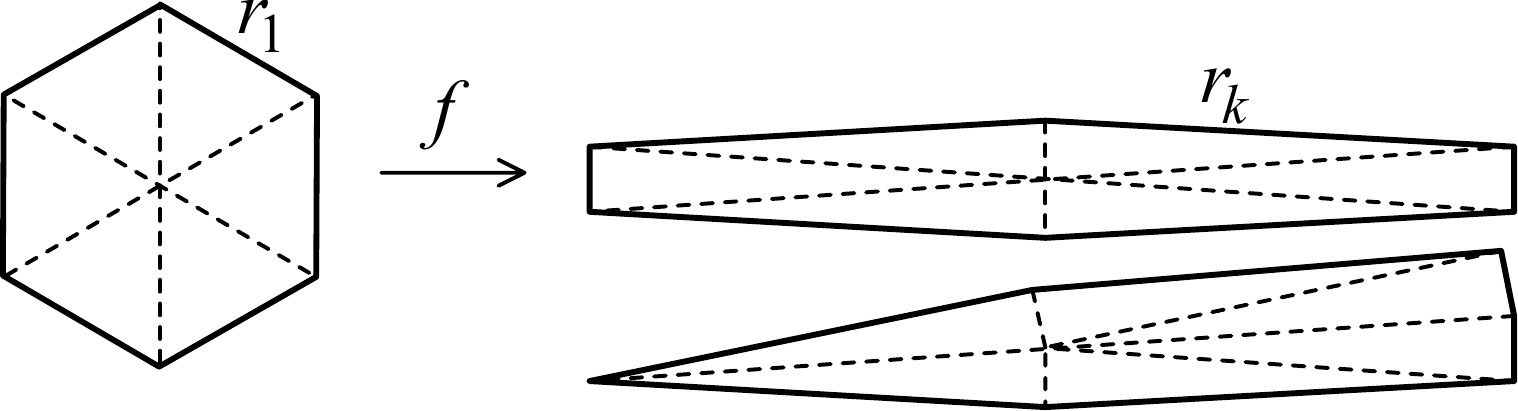} \]
\caption{Diameter of deformed hexagons is $2r_k$
\label{fig:affine-6gons}}
\end{figure}

Note that $f$ distorts the basic regular triangles in the regular hexagons of edge 
length $r_1$ into isosceles triangles with longer edge lengths $r_k$.
Furthermore, as the central $(6k)$-gon can be dissected into $6k$ isosceles triangles intersecting in $o$ with longest 
edge lengths equal to $r_k$, the diameter of this $(6k)$-gon is also $2r_k$.
Since area is preserved by $f$, we get
$\frac{1}{2}r_k^2 \sin(\frac{\pi}{3k})=\frac{1}{6}$. This yields 
\[ r_k = \frac{1}{\sqrt{3 \sin(\pi/3k)}}, \; \mbox{ which implies } D_k = 2 r_k = 
\frac{2}{\sqrt{3 \sin(\pi/3k)}}. \]
Let us plug this into the formula for the upper bound on $i(\F)$ in Theorem \ref{thm:indexupperbound}, and compute the 
ratio of the obtained expression with the total index $i(\F_k) = 6k-6$ of our construction. Then we have
\[ 
\lim_{k \to \infty} \frac{1}{6k-6} \Big( \frac{2 \pi D_k^2}{A_k} -6 \Big) =
\lim_{k \to \infty} \frac{2 \pi \frac{4}{3 \sin\left(\frac{\pi}{3k} \right)}-6}{6k-6}
=  \lim_{k \to \infty} \frac{4 \frac{\frac{\pi}{3k}}{\sin\left(\frac{\pi}{3k}\right)} - \frac{3}{k}} {3-\frac{3}{k}} = \frac{4}{3},   \]
where in the last step we used the well-known limit $\lim\limits_{x \to 0} \frac{\sin (x)}{x} = 1$.
\end{proof}                    

\section{Open questions}\label{sec:ques}

Our first question is motivated by the result in \cite{KPT18_2}.

\begin{ques}
Let $k \geq 1$ be an arbitrary integer. Prove or disprove that there is a unit area tiling of $\Re^2$ with equal perimeter convex hexagons such that the number of irregular vertices of the tiling is at least $k$. Similarly, are there equal area and equal perimeter tilings of $\Re^2$ in which every tile has at least six edges, and at least $k$ tiles have more?
\end{ques}

\begin{ques}
Let $k \geq 1$ be an arbitrary integer. What is the smallest value of $n$ such that for any $k \geq 3$, there is a tiling $\F$ of $\Re^2$ with convex (or possibly non-convex) hexagons, and with $n$ prototiles, such that the number of irregular vertices in $\F$ is at least $k$? Our proof of Theorem \ref{thm:main} shows $n \le 5$. In particular we ask whether there are monohedral ($n=1$) tilings satisfying this property?
\end{ques}

\begin{ques}
Can we improve the ratio $\frac{4}{3}$ in Theorem \ref{thm:asymptotic}?
That is, are there sequences of normal tilings by convex polygons with at least six edges for which the ratio in Theorem \ref{thm:asymptotic}
tends to less than $\frac{4}{3}$? Are there such sequences of unit area tilings with bounded perimeters?
\end{ques}

\begin{ques}
It is also tempting to ask about an analogue of Theorem \ref{thm:main} in higher dimensions.
The question becomes much harder, because we do not even know what the equivalents of the
hexagons in dimension 3 are. In other words: let $n$ be the minimal number of faces of a tile 
in a normal tiling by convex tiles in $\Re^3$. What is the maximal value of $n$? Engel showed 
\cite{engel81} that it is at least 38 by constructing a convex polyhedron with 38 faces that 
tiles $\Re^3$ in a face-to-face manner.
\end{ques}

\section*{Acknowledgements}
We thank the BIRS and the organizers of the meeting ``Soft Packings, Nested Clusters, and 
Condensed Matter'' at Casa Matem\'atica Oaxaca where we obtained the results in this paper. 
The first author is supported by the Research Centre for Mathematical Modelling (RCM$^2$) of
Bielefeld University. 
The third author is partially supported by the NKFIH Hungarian Research Fund grant 119245, the J\'anos Bolyai Research Scholarship of the Hungarian Academy of Sciences, and grants BME FIKP-V\'IZ and \'UNKP-19-4 New National Excellence Program by the Ministry of Innovation and Technology.

\end{document}